\begin{document}
\newtheorem{thm}{Theorem}[section]
\newtheorem{lem}[thm]{Lemma}
\newtheorem{pro}[thm]{Proposition}
\newtheorem{conj}[thm]{Conjecture}
\newtheorem{cor}[thm]{Corollary}
\newtheorem{prob}[thm]{Problem}
\newcommand{\DD}{\mathbb{D}}
\theoremstyle{definition}
\newtheorem{defi}[thm]{Definition}
\newtheorem{ex}[thm]{Example}
\newtheorem{question}[thm]{Question}
\newtheorem{remark}[thm]{Remark}
\newcommand{\CC}{{\mathbb C}}
\newcommand{\RR}{{\mathbb R}}

\newcommand{\vrho}{\varrho}
\newcommand{\D}{\mathbb{D}}
\newcommand{\TT}{\mathbb{T}}
\newcommand{\C}{\mathbb{C}}
\newcommand{\p}{{p(\cdot)}}
\newcommand{\q}{{q(\cdot)}}
\newcommand{\tcr}[1]{\textcolor{red}{#1}}
\newcommand{\tcb}[1]{\textcolor{blue}{#1}}
\newcommand{\pprime}{{p'(\cdot)}}

\newcommand{\essi}{\operatornamewithlimits{ess\,inf}}
\newcommand{\esss}{\operatornamewithlimits{ess\,sup}}
\providecommand{\norm}[1]{\lVert#1\rVert}
\newcommand{\LS}{\ensuremath{L^{p(\cdot)}(\Omega,\mu)}}

\numberwithin{equation}{section}

\title{Analytic Variable Exponent Hardy Spaces}

\author{\|Gerardo R. |Chac\'on|, Washington D.C.,
        \|Gerardo A. |Chac\'on|, Bogot\'a}



\abstract 
   We introduce a variable exponent version of the Hardy space of analytic functions on the unit disk, we show some properties of the space, and give an example of a variable exponent $\p$ that satisfies the $\log$-H\"older condition such that $H^\p\neq H^q$ for any constant exponent $1<q<\infty$. We also consider the variable exponent version of the Hardy space on the upper-half plane.
  \endabstract

\keywords
 Variable Exponent Spaces; Hardy Spaces
 \endkeywords

\subjclass
30H10, 42B30 
\endsubjclass

\thanks
   The research has been partially supported by research project 2015004: ``Contribuciones a la teor\'ia de operadores en espacios de funciones anal\'iticas''. Universidad Antonio Nari\~no
\endthanks

\section{Introduction}
Variable Exponent Lebesgue spaces  are a generalization of classical Lebesgue spaces $L^p$  in which the exponent $p$ is a measurable function. Such spaces where introduced by  Orlicz \cite{orlicz} and developed by Kov\'a{\v{c}}ik and R\'akosn\'{i}k \cite{kovrak}.    Although such spaces have received a considerable amount of attention, little is known about their analytic version. 

Recently, the research subject has received increasing interest and some progress has been made. For example,  in \cite{KS} a version of $BMO$ spaces with variable exponents is considered. Bergman spaces with variable exponents have been studied in \cite{charaf2014,  charaf2016b, charaf2016} and a different approach has been taken in \cite{KaSa16b} and \cite{KaSa16}, much of the research done in the area assume the $\log$-H\"{o}lder condition on the exponent, which is a growth condition that usually guarantees the boundedness of a Hardy-Littlewood maximal operator in a related space.  One case of function spaces on unbounded domains have been studied in \cite{MPT12}. The theory of  Orlicz spaces of analytic functions has also received recent interest, see for example \cite{LLQR10} and  \cite{LHQR13}. Little is known about a variable exponent version of Hardy spaces on the unit disk. A first approach was taken in  \cite{KP1} and \cite{KP2}. In this article, we introduce a version of variable exponent Hardy spaces on the unit disk and on the upper half-plane, we prove a Poisson-type representation in both cases, and show an estimate for the norm of the reproducing kernels. We also show an an example of a variable exponent $\p$  such that $H^\p\neq H^q$ for any constant exponent $1<q<\infty$. Most of the result presented in this article are analogous to the classical case, we include detailed proofs for the sake of completeness. 

The article is distributed as follows. In the next section, we consider the case of the unit disk by first introducing the harmonic version of the spaces, and showing a representation in terms of the Poisson kernel and the boundary values of the functions. We also prove an estimate for the norm of the reproducing kernels on the variable exponent Hardy spaces. In Section 3 we show through an example that variable exponent Hardy spaces do not coincide with the classical counterparts, even under the usual regularity conditions of the exponent (the Log-H\"older condition). Finally, section 4 is devoted to  study variable exponent Hardy spaces on the upper half-plane. 

While preparing this article, the authors found the preprint \cite{Fer} in which similar questions are studied. However, the techniques and results obtained here are different and have little overlapping.

\section{Variable exponent Hardy spaces in the unit disk}


We begin this section by introducing the preliminary concepts of variable exponent spaces.

\begin{defi}
Let $p:[0,2\pi]\to [1,\infty)$ be a measurable, essentially bounded function such that $p(0)=p(2\pi)$. The space $L^\p(\TT)$ is defined as the space of all functions $f:\TT\to\C$ such that \[\rho_\p(f)=\int_0^{2\pi} |f(e^{i\theta})|^{p(\theta)}\dif \theta<\infty.\] Such space is a Banach space with respect to the norm: \[\norm{f}_{L^\p(\TT)}= \inf \left\{ \lambda > 0:
\rho_\p\left(\frac{f}{\lambda}\right)\leqslant 1 \right\}.
\]
\end{defi}

We will use a regularity condition on the exponent $\p$ which is common in the study of variable exponent spaces: 

\begin{defi}\label{def:log-Holder}
	A function $p:[0,2\pi] \to [1,\infty)$ is said to be \textit{$\log$-H\"older continuous}  or to satisfy the \textit{Dini-Lipschitz condition} on $[0,2\pi]$ if there exists a positive constant $C_{\log}$ such that
\begin{equation*}
	|p(x)-p(y)|\leqslant \frac{C_{\log}}{\log\left( 1/|x-y| \right)},
	\label{eq:local}
\end{equation*}
for all $x, y \in [0,2\pi]$. We will denote as $p^+ =    \esss_{x\in [0,2\pi]}p(x)$ and $ p^-=\essi_{x\in [0,2\pi]}p(x)$.

\end{defi}

\begin{defi}
For each $z$ in the unit disk $\D$, the Poisson kernel $P(z,\zeta)$ is defined as \[P(z,\zeta)= \frac{1-|z|^2}{|z-\zeta|^2}\] and the Poisson transform of a function $f\in L^\p(\TT)$ is defined as \[Pf(z)=\int_\TT P(z,\zeta) f(\zeta)\dif m(\zeta).\]
\end{defi}

We will use the following result from \cite{Shara}:

\begin{thm}\label{thm:Shara_Poisson}
Suppose that  $p:[0,2\pi] \to [1,\infty)$ is $\log$-H\"older continuous. Given $0<r<1$, and $f\in L^\p(\TT)$,  the linear operators $P_r:L^\p(\TT)\to L^\p(\TT)$ defined as \[P_rf(\zeta)=Pf(r\zeta)\] are uniformly bounded on $L^\p(\TT)$, $\norm{P_r(f)}_{L^\p(\TT)}\lesssim \norm{f}_{L^\p(\TT)}$, and for every $f\in L^\p(\TT)$, \[\norm{f-P_rf}_\p\to 0,\quad\text{as }r\to 1^-\]
\end{thm}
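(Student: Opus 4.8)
The plan is to recognize $P_r$ as convolution against the circle Poisson kernel and to control it by the Hardy--Littlewood maximal operator. Writing $z=re^{i\theta}$, a direct computation shows that
\[P_rf(e^{i\theta}) = Pf(re^{i\theta}) = \frac{1}{2\pi}\int_0^{2\pi} \frac{1-r^2}{1-2r\cos(\theta-t)+r^2}\, f(e^{it})\,dt,\]
so that $P_rf = f * \Phi_r$, where $\Phi_r(\theta)=\frac{1-r^2}{1-2r\cos\theta+r^2}$ is nonnegative, even, decreasing in $|\theta|$ on $[0,\pi]$, and satisfies $\norm{\Phi_r}_{L^1(\TT)}=1$ for every $r\in(0,1)$. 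This exhibits $\{\Phi_r\}$ as an approximate identity whose members are uniformly controlled by their own radial decrease.

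For the uniform boundedness I would first establish the pointwise estimate
\[\sup_{0<r<1}|P_rf(\zeta)| \le C\, (Mf)(\zeta),\]
where $M$ denotes the Hardy--Littlewood maximal operator over arcs of $\TT$ and $C$ is absolute. This is the classical fact that convolution with a nonnegative, even, radially decreasing $L^1$-kernel is bounded pointwise by the $L^1$-norm of the kernel times the maximal function; since $\norm{\Phi_r}_{L^1(\TT)}=1$ for all $r$, the constant is independent of $r$. The essential analytic input is then the boundedness of $M$ on $L^\p(\TT)$, which is precisely where the $\log$-H\"older hypothesis enters: under the Dini--Lipschitz condition (and since $p^+<\infty$) one has $\norm{Mf}_{L^\p(\TT)}\lesssim\norm{f}_{L^\p(\TT)}$. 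Combining the two displays yields $\norm{P_rf}_{L^\p(\TT)}\lesssim\norm{f}_{L^\p(\TT)}$ uniformly in $r$.

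For the convergence $\norm{f-P_rf}_\p\to0$ I would run the standard approximate-identity argument through a dense subclass. On the dense subspace $C(\TT)\subset L^\p(\TT)$ (density holding because $p^+<\infty$), the classical approximate-identity property of the Poisson kernel gives $\norm{g-P_rg}_\infty\to0$ for continuous $g$; since $\TT$ has finite measure, uniform convergence dominates the variable-exponent norm, $\norm{g-P_rg}_{L^\p(\TT)}\le\norm{1}_{L^\p(\TT)}\,\norm{g-P_rg}_\infty\to0$. For general $f$, pick $g\in C(\TT)$ with $\norm{f-g}_\p<\varepsilon$ and split
\[\norm{f-P_rf}_\p \le \norm{f-g}_\p + \norm{g-P_rg}_\p + \norm{P_r(g-f)}_\p.\]
The first term is below $\varepsilon$, the third is $\lesssim\varepsilon$ by the uniform bound of the previous step, and the middle term tends to $0$; letting $\varepsilon\to0$ finishes the argument.

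The main obstacle is the boundedness of the maximal operator on $L^\p(\TT)$: it is the one genuinely deep ingredient and is exactly what forces the $\log$-H\"older hypothesis, so a self-contained treatment must either invoke the theorem of Diening and of Cruz-Uribe, Fiorenza and Neugebauer or reprove it. Everything else --- the pointwise maximal-function domination and the three-term approximation --- is routine once that tool and the density of continuous functions are in hand.
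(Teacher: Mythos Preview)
The paper does not prove this theorem; it simply quotes it from Sharapudinov, so there is no in-paper argument to compare your proposal against.

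Your route is the natural modern one and is correct under the extra hypothesis $p^->1$, but the statement as written allows $p:[0,2\pi]\to[1,\infty)$, so $p^-=1$ is permitted. The step ``$\norm{Mf}_{L^{\p}(\TT)}\lesssim\norm{f}_{L^{\p}(\TT)}$'' breaks down there: the Hardy--Littlewood maximal operator is already unbounded on $L^1$, and the Diening / Cruz-Uribe--Fiorenza--Neugebauer theorem explicitly requires $p^->1$. The conclusion of the theorem is nevertheless true at the endpoint, because each $P_r$ is a positive averaging operator (a contraction on $L^1$ and on $L^\infty$), but that fact is not recovered by the maximal-function domination you invoke. Sharapudinov's original argument proceeds directly with the convolution kernels and the $\log$-H\"older modulus, which is why his statement covers $p^-=1$.

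If you are willing to add the hypothesis $p^->1$ (as the paper itself imposes in the half-plane section), your proof is complete and clean; otherwise you need a separate treatment of the endpoint, or you should cite Sharapudinov as the paper does.
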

 
 We are now ready to define the harmonic Hardy spaces with variable exponents.
 
 \begin{defi}
 Let  $f:\D\to\C$ and for $0<r<1$, define $f_r:\TT\to\C$ as $f_r(\zeta)=f(r\zeta)$. Given $p:[0,2\pi] \to [1,\infty)$, the {\em variable exponent harmonic Hardy space} $h^\p(\D)$ is defined as the space of harmonic functions $f:\D\to\C$ such that \[\norm{f}_{h^\p(\D)}=\sup_{0\leq r<1}\norm{f_r}_{L^\p(\TT)}<\infty.\]
 \end{defi}

Notice that, since $Pf$ is harmonic for $f\in L^\p(\TT)$, then Theorem \ref{thm:Shara_Poisson} shows that the Poisson transform $P:L^\p(\TT)\to h^\p(\D)$ is bounded. Moreover, we have the following theorem analogous to the constant exponent context. We follow the ideas in \cite{Mas}.

\begin{thm}\label{thm:Poisson}
Suppose that  $p:[0,2\pi] \to [1,\infty)$ is $\log$-H\"older continuous. Then for every $U\in h^\p(\D)$, there exists $u\in L^\p(\TT)$ such that $Pu=U$ and moreover, $\norm{U}_{h^\p(\D)}\sim \norm{u}_{L^\p(\TT)}$.
\end{thm}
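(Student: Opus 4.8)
The plan is to recover the boundary datum $u$ as a weak limit of the dilates $U_r$ and then to identify its Poisson transform with $U$ via the reproducing property of the Poisson kernel. I work under the standing assumption $1<p^-\leqslant p^+<\infty$, which is the regime in which Theorem~\ref{thm:Shara_Poisson} applies. Under this hypothesis $L^\p(\TT)$ is reflexive and, by the Kov\'a{\v{c}}ik--R\'akosn\'{i}k duality theory \cite{kovrak}, its dual is $L^{p'(\cdot)}(\TT)$ under the pairing $\langle f,g\rangle=\int_\TT fg\,\dif m$; in particular, bounded sequences in $L^\p(\TT)$ have weakly convergent subsequences (Eberlein--\v{S}mulian) and the norm is weakly lower semicontinuous.

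First I would set $M=\norm{U}_{h^\p(\D)}=\sup_{0\leqslant r<1}\norm{U_r}_{L^\p(\TT)}$, so that the family $\{U_r\}_{0<r<1}$ is contained in the closed ball of radius $M$ of $L^\p(\TT)$. Picking any sequence $r_n\to 1^-$, I would extract (without relabeling) a weakly convergent subsequence $U_{r_n}\rightharpoonup u$ with $u\in L^\p(\TT)$, and lower semicontinuity of the norm would give at once
\[
\norm{u}_{L^\p(\TT)}\leqslant\liminf_{n}\norm{U_{r_n}}_{L^\p(\TT)}\leqslant M .
\]

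The key step is to show $Pu=U$. Fix $z\in\D$. Since $U$ is harmonic on $\D$, for each $0<r<1$ the dilate $U_r$ is harmonic on the disk $\{|w|<1/r\}$, hence on a neighbourhood of $\overline{\D}$; being continuous on $\overline{\D}$ it is reproduced by its Poisson integral, so
\[
U(r_n z)=U_{r_n}(z)=\int_\TT P(z,\zeta)\,U_{r_n}(\zeta)\,\dif m(\zeta).
\]
For fixed $z\in\D$ the kernel $P(z,\cdot)$ is bounded, hence $P(z,\cdot)\in L^{p'(\cdot)}(\TT)$ and the map $g\mapsto\int_\TT P(z,\zeta)g(\zeta)\,\dif m(\zeta)$ is a bounded functional on $L^\p(\TT)$. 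Testing the weak convergence $U_{r_n}\rightharpoonup u$ against it shows that the right-hand side tends to $\int_\TT P(z,\zeta)u(\zeta)\,\dif m(\zeta)=Pu(z)$, while the left-hand side $U(r_n z)\to U(z)$ by continuity of $U$. Hence $Pu(z)=U(z)$ for every $z\in\D$.

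Finally I would assemble the norm equivalence. One inequality is already available from the second paragraph, namely $\norm{u}_{L^\p(\TT)}\leqslant M=\norm{U}_{h^\p(\D)}$. For the reverse, since $U=Pu$ we have $U_r=P_r u$, and Theorem~\ref{thm:Shara_Poisson} gives $\norm{U_r}_{L^\p(\TT)}=\norm{P_r u}_{L^\p(\TT)}\lesssim\norm{u}_{L^\p(\TT)}$ uniformly in $r$, whence $\norm{U}_{h^\p(\D)}\lesssim\norm{u}_{L^\p(\TT)}$; combining the two yields $\norm{U}_{h^\p(\D)}\sim\norm{u}_{L^\p(\TT)}$. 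I expect the main obstacle to be purely functional-analytic: justifying that $L^\p(\TT)$ is reflexive with dual $L^{p'(\cdot)}(\TT)$ so that both the weak compactness extraction and the lower semicontinuity of the norm are at our disposal. Once these are granted, the reproducing-kernel identification of $Pu$ with $U$ is routine.
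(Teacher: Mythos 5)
Your proof is correct and follows essentially the same route as the paper: both recover $u$ as a weak limit of the dilates (the paper phrases this as weak-$*$ convergence of the functionals $\Lambda_n(f)=\int_\TT fU_n\,\dif m$ on $L^{q(\cdot)}(\TT)$ via Banach--Alaoglu plus the duality $(L^{q(\cdot)})^\ast\cong L^\p$, which under your standing assumption $1<p^-\leqslant p^+<\infty$ is the same as your weak sequential compactness argument in the reflexive space $L^\p(\TT)$), then identify $Pu=U$ by testing against the bounded kernel $P(z,\cdot)$ and close the norm equivalence with Theorem~\ref{thm:Shara_Poisson}. The only cosmetic difference is that you obtain $\norm{u}_{L^\p(\TT)}\leqslant\norm{U}_{h^\p(\D)}$ from weak lower semicontinuity rather than from the bound on $\norm{\Lambda}$.
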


\begin{proof}
Suppose that $U\in h^\p(\D)$ and for $n\geq 2$ define the dilations \[U_n(z)=U\left(\left(1-\frac{1}{n}\right)z\right).\] then $U_n$ is harmonic in a neighborhood of $\overline{\D}$ and consequently \[U_n(z)=\int_\TT P(z,\zeta)U_n(\zeta)\dif m(\zeta).\]

Now let $q:[0,2\pi] \to [1,\infty)$ be a measurable function such that $\frac{1}{p(t)}+\frac{1}{q(t)}=1$ for every $t\in[0,2\pi]$. Let's define the linear functionals $\Lambda_n:L^\q(\TT)\to\C$ as
\[\Lambda_n(f)=\int_\TT f(\zeta)U_n(\zeta)\dif m(\zeta).\] By H\"older's inequality, we have that $|\Lambda_n(f)|\lesssim \norm{U_n}_{L^\p(\TT)}\norm{f}_{L^\q(\TT)}$, consequently $\Lambda_n$ belongs to the dual space $(L^\q)^\ast$ and $\norm{\Lambda_n}\lesssim \norm{U_n}_{L^\p(\TT)}\leq \norm{U}_{h^\p(\D)}$.

We now use Banach-Alaoglu's Theorem to find $\Lambda\in \left(L^\q(\TT)\right)^\ast$ and a subsequence $\{\Lambda_{n_k}\}\subset \{\Lambda_n\}$  such that \[\Lambda_{n_k}(f)\to \Lambda(f),\qquad \text{as } k\to\infty\] for every $f\in L^\q(\TT)$ and $\norm{\Lambda}\lesssim \norm{U}_{h^\p(\TT)}$. 

The duality relation (see for example \cite{CUF}) on $L^\q(\TT)$ allow us to find a function $u\in L^\p(\TT)$ such that $\norm{u}_{L^\p(\TT)}\sim \norm{\Lambda}$ and for every $f\in L^\q(\TT)$, it holds that \[\Lambda(f)=\int_\TT f(\zeta)u(\zeta)\dif m(\zeta).\] In particular, 
$U_{n_k}(z) = \Lambda_{n_k}(P(z,\cdot))\to \Lambda(P(z,\cdot))$. Thus, \[U(z)=\int_\TT P(z,\zeta)u(\zeta)\dif m(\zeta).\] Moreover, putting the previous estimates of $\norm{U}_{h^\p(\D)}$ together with Theorem \ref{thm:Shara_Poisson} we get that \[\norm{U}_{h^\p(\D)}\sim \norm{u}_{L^\p(\TT)}.\]
\end{proof}

\begin{thm}\label{thm:subset}
Suppose that  $p:[0,2\pi] \to [1,\infty)$ is a $2\pi$-periodic function. Then \[h^{p^+}(\D)\subset h^\p(\D)\subset h^{p^-}(\D),\] and moreover, the inclusion is continuous.
\end{thm}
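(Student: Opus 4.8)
The plan is to reduce both inclusions to the corresponding continuous embeddings of variable exponent Lebesgue spaces on the circle, since by definition the harmonic Hardy space norm is a supremum of $L^\p(\TT)$-norms of dilates. Thus everything follows once one establishes, on the finite measure space $\TT$, the continuous embeddings
\[
L^{p^+}(\TT)\hookrightarrow L^\p(\TT)\hookrightarrow L^{p^-}(\TT).
\]
These are standard in variable exponent theory and, importantly, do not require the $\log$-H\"older condition: only $1\le p^-\le p^+<\infty$ and the finiteness $|\TT|=2\pi$ are used, which is exactly what the hypotheses of the theorem provide.

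First I would prove the two norm inequalities for a single function $g:\TT\to\C$ by comparing modulars. Since $p^-\le p(\theta)\le p^+$ a.e., I would split $\TT$ into $\{|g|\le 1\}$ and $\{|g|>1\}$ and estimate the integrand $|g|^{p(\theta)}$ against $|g|^{p^+}$ (respectively against $|g|^{p^-}$) on each piece: on $\{|g|>1\}$ the larger exponent dominates, while on $\{|g|\le 1\}$ the integrand is at most $1$ and is absorbed by the factor $|\TT|=2\pi$. Normalizing $\norm{g}_{L^{p^+}(\TT)}=1$ so that $\rho_{p^+}(g)\le 1$, this yields $\rho_\p(g)\le 1+2\pi$, and symmetrically, normalizing $\norm{g}_{L^\p(\TT)}=1$ gives $\rho_{p^-}(g)\le 1+2\pi$.

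Next I would convert these modular bounds into norm bounds. For $\lambda\ge 1$ and for either modular one has $\rho(g/\lambda)\le\lambda^{-p^-}\rho(g)$, since $\lambda^{-p(\theta)}\le\lambda^{-p^-}$; hence choosing $\lambda=(1+2\pi)^{1/p^-}$ forces the modular below $1$, and by homogeneity of the Luxemburg norm this gives, with $C=(1+2\pi)^{1/p^-}$,
\[
\norm{g}_{L^\p(\TT)}\le C\,\norm{g}_{L^{p^+}(\TT)}\quad\text{and}\quad \norm{g}_{L^{p^-}(\TT)}\le C\,\norm{g}_{L^\p(\TT)}.
\]
With these in hand the theorem is immediate: given $U\in h^{p^+}(\D)$, apply the first inequality to each dilate $U_r(\zeta)=U(r\zeta)$ and take the supremum over $0\le r<1$ to obtain $\norm{U}_{h^\p(\D)}\le C\,\norm{U}_{h^{p^+}(\D)}$; the inclusion $h^\p(\D)\subset h^{p^-}(\D)$ follows in the same way from the second inequality applied to each $U_r$.

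I expect the only genuine obstacle to be the passage from the modular estimate to the norm estimate, since the Luxemburg norm is defined through an infimum over scalings and cannot be read off directly from a modular bound. The rescaling by $\lambda=(1+2\pi)^{1/p^-}$ described above resolves this cleanly, and the remaining steps are routine reductions that use nothing beyond the definition of the $h^\p(\D)$-norm as a supremum of $L^\p(\TT)$-norms.
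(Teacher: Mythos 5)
Your proof is correct, and it takes a genuinely different route from the paper's. The paper proves each inclusion by applying the variable--exponent H\"older inequality with the exponent pair $\bigl(p^+/\p,\ p^+/(p^+-\p)\bigr)$ (respectively $\bigl(\p/p^-,\ \p/(\p-p^-)\bigr)$), writing $\rho_\p(f_r)\lesssim \norm{|f_r|^\p}_{L^{p^+/\p}(\TT)}\norm{1}_{L^{p^+/(p^+-\p)}(\TT)}$ and then using the unit-ball property to pass from a modular bound to a norm bound; the constant there is implicit, coming from the H\"older constant and from $\norm{1}$ in the auxiliary space. You instead compare the modulars directly by the pointwise splitting of $\TT$ into $\{|g|\le 1\}$ and $\{|g|>1\}$, getting $\rho_\p(g)\le \rho_{p^+}(g)+2\pi$ (and symmetrically for $p^-$), and then convert the modular bound into a norm bound by the explicit rescaling $\rho(g/\lambda)\le\lambda^{-p^-}\rho(g)$ for $\lambda\ge 1$. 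Your argument is more elementary and self-contained: it avoids the generalized H\"older inequality and the norm of constants in $L^{p^+/(p^+-\p)}(\TT)$ (where the exponent degenerates to $\infty$ on the set $\{p=p^+\}$ and one must invoke the convention for the essential-sup part), and it yields the explicit embedding constant $(1+2\pi)^{1/p^-}$. The one point you rightly flag --- that $\norm{g}_{L^\p(\TT)}=1$ implies $\rho_\p(g)\le 1$ --- does use $p^+<\infty$, but that is guaranteed by the paper's standing assumption that $p$ is essentially bounded, so there is no gap.
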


\begin{proof}
Suppose $f\in h^{p^+}(\D)$ and $\norm{f}_{h^{p^+}(\D)}\leq 1$, then by H\"older's inequality \[\rho_\p(f_r)=\int_0^{2\pi} |f(e^{i\theta})|^{p(\theta)}\dif \theta\lesssim \norm{|f_r|^\p}_{L^{p^+/\p}(\TT)}\norm{1}_{L^{p^+/(p^+-\p)}(\TT)}.\]
Now, since $\rho_{p^+/\p}(|f_r|^\p)=\rho_{p^+}(f_r)\leq \norm{f}_{h^{p^+}(\D)}\leq 1$, then there exists $C>0$ such that $\norm{|f_r|^\p}_{L^{p^+/\p}(\TT)}\leq C$ and therefore $\rho_\p(f_r)\lesssim 1$, so $f\in h^\p(\D)$.

For a general $f\in  h^{p^+}(\D)$, take $g=f\norm{f}_{h^{p^+}(\D)}^{-1}$ and apply the previous result to conclude that \[\norm{f}_{h^\p(\D)}\leq C\norm{f}_{h^{p^+}(\D)}.\]

Now suppose $f\in h^{\p}(\D)$ and $\norm{f}_{h^\p(\D)}\leq 1$, then for every $0\leq r<1$, it holds that $\rho_{\p}(f_r)\leq 1$. Now, by H\"older's inequality,
\[\rho_{p^-}(f_r)\lesssim \norm{|f_r|^{p^-}}_{L^{\p/p^-}(\TT)}\|1\|_{L^{\p/(\p-p^-)}(\TT)}.\] But $\rho_{\p/p^-}(|f_r|^{p^-})=\rho_\p(f_r)\leq 1$ and consequently, there exists a constant $C>0$ such that $\rho_{p^-}(f_r)\leq C$. Thus, $f\in h^{p^-}(\D)$. 

The result follows similarly as before for general $f\in h^{\p}(\D)$.
\end{proof}

As a consequence of Theorems \ref{thm:Poisson} and \ref{thm:subset}, we obtain the following result.

\begin{cor}
Suppose that  $p:[0,2\pi] \to [1,\infty)$ is a $2\pi$-periodic function. Then if $U\in h^\p(\D)$ and $U=Pu$ for $u\in L^\p(\TT)$, then for almost every $\theta \in  [0,2\pi],$\[\lim_{r\to1^-} U(re^{i\theta})=u(e^{i\theta}).\]
\end{cor}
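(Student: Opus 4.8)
The plan is to establish the boundary convergence by combining the Poisson representation from Theorem~\ref{thm:Poisson} with the classical theory of nontangential limits of Poisson integrals. The key observation is that by Theorem~\ref{thm:subset}, since $p$ is $2\pi$-periodic (hence $1\leqslant p^-\leqslant p(\theta)\leqslant p^+<\infty$ almost everywhere), we have the continuous inclusion $h^\p(\D)\subset h^{p^-}(\D)$. In particular, any $U\in h^\p(\D)$ belongs to the classical harmonic Hardy space $h^{p^-}(\D)$ with $p^-\geqslant 1$. For such classical spaces, Fatou's theorem guarantees that the radial (indeed nontangential) limit $\lim_{r\to 1^-}U(re^{i\theta})$ exists for almost every $\theta$.

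First I would invoke the inclusion $U\in h^{p^-}(\D)$ and recall the classical fact that a function in $h^{q}(\D)$ with $1\leqslant q<\infty$ has radial boundary values almost everywhere, and that when $U=Pu$ for some $u\in L^\p(\TT)\subset L^{p^-}(\TT)$, these radial limits coincide almost everywhere with the boundary data $u$. Concretely, the standard argument is that the Poisson integral of an $L^1$ function converges nontangentially to that function at every Lebesgue point of the data, and almost every point is a Lebesgue point. Since $u\in L^\p(\TT)\subset L^1(\TT)$ (because $\p$ is essentially bounded below by $1$ and the measure is finite, so $L^\p(\TT)\hookrightarrow L^1(\TT)$), this Lebesgue-point argument applies directly to $Pu$.

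The proof therefore reduces to verifying two inclusions and then citing the classical Fatou-type result. I would first note that $L^\p(\TT)\subset L^1(\TT)$ follows from Hölder's inequality in the same manner as in the proof of Theorem~\ref{thm:subset}, using that the underlying measure on $\TT$ is finite and $p^-\geqslant 1$. Then, with $u\in L^1(\TT)$ and $U=Pu$, the classical theorem on nontangential convergence of Poisson integrals (valid for any integrable boundary datum) yields
\[
\lim_{r\to 1^-} U(re^{i\theta})=\lim_{r\to 1^-}\int_\TT P(re^{i\theta},\zeta)u(\zeta)\dif m(\zeta)=u(e^{i\theta})
\]
for almost every $\theta\in[0,2\pi]$, namely at every Lebesgue point of $u$.

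The main obstacle, such as it is, lies not in any delicate estimate but in correctly routing the variable-exponent datum into the classical framework: one must confirm that the representing function $u$ from Theorem~\ref{thm:Poisson} is genuinely integrable so that the classical Poisson convergence theorem applies, and that the radial limit of $U$ agrees with $u$ rather than with some other representative. Both points are handled by the embedding $L^\p(\TT)\hookrightarrow L^1(\TT)$ and the uniqueness of boundary values for Poisson integrals of integrable functions. Once these are in place, no variable-exponent-specific machinery beyond Theorems~\ref{thm:Poisson} and~\ref{thm:subset} is needed, since the almost-everywhere convergence statement is insensitive to which $L^q$-norm controls $U$.
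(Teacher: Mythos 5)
Your proposal is correct and follows the same route as the paper, which simply observes that $h^\p(\D)\subset h^1(\D)$ and invokes the classical Fatou theorem for Poisson integrals of $L^1(\TT)$ data; you have merely spelled out the embedding and the Lebesgue-point argument that the paper leaves implicit.
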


\begin{proof}
This follows since $h^\p(\D)\subset h^1(\D)$.
\end{proof}

\begin{defi}
Suppose that  $p:[0,2\pi] \to [1,\infty)$ is a $2\pi$-periodic function. The variable exponent Hardy space $H^\p(\D)$ is defined as the space of analytic functions $f:\D\to\C$ such that $f\in h^\p(\D)$.
\end{defi}

In an analogous way to the classical setting (see for example \cite{Duren}) it is shown that $H^\p(\D)$ can be identified with the subspace of functions in $L^\p(\TT)$ whose negative Fourier coefficients are zero, and as such, $H^\p(\D)$ is a Banach space.

Recall that for functions in $f\in H^1(\D)$, we have the reproducing formula:

\begin{equation}\label{eq: reproducing}
f(z)=\int_\TT \frac{f(\zeta)}{1-\overline{\zeta}z}\dif m (\zeta).
\end{equation}

For each $z\in\D$, the functions $K_z:\D\to\CC$ defined as \[K_z(w)=\frac{1}{1-\overline{z}w},\] are called {\em reproducing kernels}. They are bounded on $\D$ and consequently belong to every space $H^\p(\DD)$. Moreover, as a consequence of the reproducing formula and Hanh-Banach theorem, the linear span of $\{K_z:z\in\D\}$ is dense in $H^\p(\D)$. Consequently, the set of polynomials is also dense in $H^\p(\D)$. 

\begin{thm}\label{thm:bounded_eval_funct}
Suppose that  $p:[0,2\pi] \to [1,\infty)$ is a $2\pi$-periodic function. For each $z\in \D$ consider the linear functional $\gamma_z:H^\p(\D)\to\C$ defined as $\gamma_z(f)=f(z)$. Then $\gamma_z$ is a bounded operator for every $z=|z|e^{i\theta}\in \D$ and \[\|\gamma_z\|\lesssim \frac{1}{(1-|z|)^{1/p(\theta)}}.\] Consequently, the convergence in the $H^\p(\D)$-norm implies the uniform convergence on compact subsets of $\D$.
\end{thm}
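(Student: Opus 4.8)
My plan is to pass to boundary values and then combine the reproducing formula \eqref{eq: reproducing} with the variable-exponent Hölder inequality, so that the whole estimate reduces to bounding the $L^\q(\TT)$-norm of the reproducing kernel $K_z$. First I would record that every $f\in H^\p(\D)$ has boundary values in $L^\p(\TT)$ of comparable size. By Theorem \ref{thm:subset} we have $H^\p(\D)\subset h^{p^-}(\D)\subset h^1(\D)$, so the radial limits $f^*(e^{i\phi})=\lim_{r\to1^-}f(re^{i\phi})$ exist almost everywhere and the reproducing formula \eqref{eq: reproducing} holds for $f$. For the size control, fix $\lambda>\norm{f}_{h^\p(\D)}$; since $\norm{f_r}_{L^\p(\TT)}\le\norm{f}_{h^\p(\D)}<\lambda$, the unit-ball property of the Luxemburg norm gives $\rho_\p(f_r/\lambda)\le 1$ for every $r$, and Fatou's lemma applied to $|f_r/\lambda|^{p(\phi)}\to|f^*/\lambda|^{p(\phi)}$ yields $\rho_\p(f^*/\lambda)\le 1$; letting $\lambda\downarrow\norm{f}_{h^\p(\D)}$ gives $\norm{f^*}_{L^\p(\TT)}\le\norm{f}_{h^\p(\D)}$.

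Next, using $\overline{K_z(\zeta)}=(1-\overline{\zeta}z)^{-1}$, the reproducing formula reads $f(z)=\int_\TT f^*(\zeta)\overline{K_z(\zeta)}\dif m(\zeta)$, so the variable-exponent Hölder inequality (see \cite{CUF}) gives
\[
|\gamma_z(f)|=|f(z)|\le\int_\TT|f^*(\zeta)|\,|K_z(\zeta)|\dif m(\zeta)\lesssim\norm{f^*}_{L^\p(\TT)}\,\norm{K_z}_{L^\q(\TT)}\le\norm{f}_{H^\p(\D)}\,\norm{K_z}_{L^\q(\TT)}.
\]
Hence $\norm{\gamma_z}\lesssim\norm{K_z}_{L^\q(\TT)}$, and everything reduces to the kernel estimate
\[
\norm{K_z}_{L^\q(\TT)}\lesssim(1-|z|)^{-1/p(\theta)},\qquad z=|z|e^{i\theta}.
\]

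To prove this, I would write $r=|z|$ and use the elementary two-sided bound $|1-re^{i\psi}|\sim(1-r)+|\psi|$ for $|\psi|\le\pi$, so that $|K_z(e^{i\phi})|\sim((1-r)+|\phi-\theta|)^{-1}$. I would then verify $\rho_\q(K_z/\lambda)\le 1$ for $\lambda=C(1-r)^{-1/p(\theta)}$ by splitting the integral at $|\phi-\theta|=1-r$. On the dominant arc $|\phi-\theta|\le 1-r$ the integrand is of size $((1-r)^{1/p(\theta)-1})^{q(\phi)}=((1-r)^{-1/q(\theta)})^{q(\phi)}$ (using $1/p(\theta)-1=-1/q(\theta)$), which integrates to a quantity of order $(1-r)\cdot(1-r)^{-1}=1$ that can be made $\le\tfrac12$ by enlarging $C$; on the tail $|\phi-\theta|>1-r$ the integrand behaves like $(1-r)^{q(\theta)-1}|\phi-\theta|^{-q(\theta)}$, whose integral is again controlled by a constant times $C^{-q(\theta)}$.

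The main obstacle is precisely this last step, and specifically the appearance of the pointwise value $p(\theta)$ rather than the cruder $p^-$ one would get from Theorem \ref{thm:subset} alone. Since $K_z$ concentrates on the arc $|\phi-\theta|\lesssim 1-r$, the correct power is governed by the exponent near $\theta$; to replace the variable $q(\phi)$ by $q(\theta)$ on that arc, and thereby isolate $p(\theta)$ in the exponent, one needs $q$ — equivalently $p$ — to be essentially constant there. This is exactly where the regularity of Definition \ref{def:log-Holder} enters: it forces $|q(\phi)-q(\theta)|\lesssim 1/\log(1/|\phi-\theta|)$, so on $|\phi-\theta|\sim 1-r$ the stray factor $(1-r)^{\pm|q(\phi)-q(\theta)|}$ remains bounded and the two split contributions stay of order one. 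Once the kernel bound is in hand, the final assertion is routine: on any compact $\mathcal{K}\subset\D$ one has $\sup_{z\in\mathcal{K}}(1-|z|)^{-1/p(\theta)}<\infty$, whence $\sup_{z\in\mathcal{K}}|f(z)|\lesssim_{\mathcal{K}}\norm{f}_{H^\p(\D)}$, so convergence in the $H^\p(\D)$-norm implies uniform convergence on $\mathcal{K}$.
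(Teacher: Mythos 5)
Your argument is correct and follows essentially the same route as the paper's: reduce via the reproducing formula \eqref{eq: reproducing} and the variable-exponent H\"older inequality to an $L^{\q}(\TT)$-estimate for $K_z$, then freeze the exponent at $\theta$ on the arc where the kernel is large using the $\log$-H\"older condition (the paper isolates this step as a Forelli--Rudin-type lemma and splits according to $\varphi(t)\leq 1$ rather than at $|\phi-\theta|=1-r$, but the mechanism is identical). Your added care with boundary values via Fatou and with verifying the modular bound $\rho_{\q}(K_z/\lambda)\leq 1$ directly is welcome, and you correctly observe that the $\log$-H\"older hypothesis is genuinely needed here even though the theorem's statement omits it.
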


Before proving the Theorem, we will need the following technical Lemma, which is a version of a Forelli-Rudin inequality, adapted to our context.

\begin{lem}
Suppose that  $p:[0,2\pi] \to [1,\infty)$ is a $2\pi$-periodic, $\log$-H\"older continuous function and let $q:[0,2\pi] \to [1,\infty)$ be such that $\frac{1}{p(\theta)}+ \frac{1}{q(\theta)}=1$ for every $\theta\in [0,2\pi]$. Let $1/2<r<1$ and $z=|z|e^{i\theta}$. Define the function $\varphi:[0,2\pi]\to\mathbb{R}^+$ as \[\varphi(t)=\frac{(1-|z|)^{1/q(\theta)}}{|1-|z|re^{i(t-\theta)}|}.\] Then if $\varphi(t)>1$, it holds that \[\varphi(t)^{p(t)}\lesssim \varphi(t)^{p(\theta)}.\]
\end{lem}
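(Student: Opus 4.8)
The plan is to bound the ratio $\varphi(t)^{p(t)}/\varphi(t)^{p(\theta)} = \varphi(t)^{p(t)-p(\theta)}$ and show it stays $\lesssim 1$ under the hypothesis $\varphi(t)>1$. Taking logarithms, since $\varphi(t)>1$ we have $\log\varphi(t)>0$, so it suffices to produce a uniform upper bound on $(p(t)-p(\theta))\log\varphi(t)$. Writing this factor out, I would estimate
\[
(p(t)-p(\theta))\log\varphi(t) \leqslant |p(t)-p(\theta)|\cdot\log\varphi(t),
\]
and the strategy is to control $|p(t)-p(\theta)|$ by the $\log$-H\"older condition in terms of $|t-\theta|$, while controlling $\log\varphi(t)$ in terms of the same quantity $|t-\theta|$ through the geometry of the denominator $|1-|z|re^{i(t-\theta)}|$.

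The key steps, in order, are as follows. First I would observe that the numerator $(1-|z|)^{1/q(\theta)}$ is bounded above by $1$, so the only way $\varphi(t)>1$ can hold is if the denominator $|1-|z|re^{i(t-\theta)}|$ is small; this forces $e^{i(t-\theta)}$ to be near $1$, i.e.\ $|t-\theta|$ small, which is exactly the regime where the $\log$-H\"older estimate is sharp and useful. Second, I would establish a lower bound of the form $|1-|z|re^{i(t-\theta)}| \gtrsim |t-\theta|$ (or more precisely $\gtrsim |1-e^{i(t-\theta)}| \gtrsim |t-\theta|$, using that the distance from a point on or inside the disk of radius $|z|r<1$ to $1$ dominates the arc-distance), together with the trivial lower bound $|1-|z|re^{i(t-\theta)}|\gtrsim 1-|z|r \gtrsim 1-|z|$. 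Third, from $\varphi(t)>1$ I get $|1-|z|re^{i(t-\theta)}| < (1-|z|)^{1/q(\theta)}\leqslant 1$, and combining with the lower bound yields $|t-\theta| \lesssim (1-|z|)^{1/q(\theta)}$, hence
\[
\log\varphi(t) \leqslant \log\frac{1}{|1-|z|re^{i(t-\theta)}|} \lesssim \log\frac{1}{|t-\theta|}.
\]
Fourth, the $\log$-H\"older condition gives $|p(t)-p(\theta)|\leqslant C_{\log}/\log(1/|t-\theta|)$, so the product telescopes:
\[
|p(t)-p(\theta)|\cdot\log\varphi(t) \lesssim \frac{C_{\log}}{\log(1/|t-\theta|)}\cdot\log\frac{1}{|t-\theta|} = C_{\log},
\]
a constant independent of $z$, $t$, and $\theta$. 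Exponentiating gives $\varphi(t)^{p(t)-p(\theta)}\lesssim e^{C_{\log}}\lesssim 1$, which is the claim.

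The main obstacle I anticipate is making the geometric lower bound $|1-|z|re^{i(t-\theta)}|\gtrsim |t-\theta|$ precise and verifying that the resulting bound on $\log\varphi(t)$ really is comparable to $\log(1/|t-\theta|)$ rather than something larger. One must be careful that when $1-|z|$ is tiny the exponent $1/q(\theta)$ could make $(1-|z|)^{1/q(\theta)}$ either close to $1$ or close to $0$ depending on $q(\theta)$; since $1\leqslant q(\theta)\leqslant\infty$ the exponent $1/q(\theta)\in(0,1]$, so $(1-|z|)^{1/q(\theta)}\geqslant 1-|z|$, and one should check this does not weaken the bound $|t-\theta|\lesssim(1-|z|)^{1/q(\theta)}$ into something useless. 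A secondary technical point is that $|t-\theta|$ should be interpreted on the circle (taking the representative in $[-\pi,\pi]$), so that the comparison $|1-e^{i(t-\theta)}|\sim|t-\theta|$ is valid; the $2\pi$-periodicity hypothesis on $p$ is what lets the $\log$-H\"older estimate apply to this circular distance. Once these comparisons are pinned down the rest is the routine telescoping above.
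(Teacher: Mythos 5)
Your proposal is correct and follows essentially the same route as the paper: both rest on the geometric estimate $|1-|z|re^{i(t-\theta)}|\gtrsim |t-\theta|$ (the paper uses $|1-|z|re^{i(t-\theta)}|\sim |t-\theta|+1-r|z|$) to get $\varphi(t)\lesssim 1/|t-\theta|$, and then telescope the $\log$-H\"older bound on $|p(t)-p(\theta)|$ against $\log\varphi(t)\lesssim\log(1/|t-\theta|)$. Your extra care about where $\varphi(t)>1$ forces $|t-\theta|$ to be small is a reasonable refinement but not a departure from the paper's argument.
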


\begin{proof}
We use the estimate \[1-|z|re^{i(t-\theta)}\sim |t-\theta|+1-r|z|\] that holds for every $1/2<r<1$. With this in hand, we get that for $1/2<r<1$, \[\varphi(t)\lesssim \frac{2\pi}{|t-\theta|}.\]

Now, notice that since $\p$ is $\log$-H\"older continuous, then there exists $C>0$ such that
\begin{align*}
\varphi(t)^{p(t)}=e^{p(t)\log(\varphi(t))} &\leq e^{|p(t)-p(\theta)|\log(\varphi(t))}e^{p(\theta)\log(\varphi(t))}\\ &\leq  e^{C\log({2\pi}{|t-\theta|})^{-1}}\varphi(t)^{p(\theta)}\\ &\lesssim \varphi(t)^{p(\theta)}.
\end{align*}

\end{proof}

\begin{proof}[Proof of Theorem \ref{thm:bounded_eval_funct}]
Fix $z\in\D$ and let $f\in H^\p(\D)$, then $f\in H^1(\D)$ and we can use the reproducing formula \eqref{eq: reproducing}, and H\"older's inequality to conclude that \[|f(z)\lesssim \norm{f}_{H^\p(\D)}\norm{K_z}_{H^\q(\D)}, \] where $\frac{1}{p(\theta)}+\frac{1}{q(\theta)}=1$ for every $\theta \in [0,2\pi]$.

We will estimate $\norm{K_z}_{H^\q(\D)}$. Let $1/2<r<1$, and define $E_1=\{t\in [0,2\pi]:\varphi(t)\leq 1\}$ and $E_2=[0,2\pi]\setminus E_1$. Then 
\begin{align*}
\int_0^{2\pi}\left((1-|z|)^{1/p(\theta)}|K_z(re^{it})|\right)^{q(t)} \dif t &= \int_0^{2\pi}\left(\varphi(t)\right)^{q(t)} \dif t\\  &= \int_{E_1}\left(\varphi(t)\right)^{q(t)} \dif t +\int_{E_2}\left(\varphi(t)\right)^{q(t)} \dif t\\ &\lesssim 1+ \int_0^{2\pi}\varphi(t)^{q(\theta)}\dif t.
\end{align*}

We now use Forelli-Rudin estimates (See for example, \cite{HKZ}, Theorem 1.7) to conclude that $$\int_0^{2\pi}\varphi(t)^{q(\theta)}\dif t\sim 1.$$

This shows that $\|K_z(r\cdot)\|_{H^\q(\D)}\lesssim \frac{1}{(1-|z|)^{1/p(\theta)}}$ for $1/2<r<1$.

Now, if $0\leq r\leq 1/2$, then  $\varphi(t)\lesssim 1$ for every $t\in[0,2\pi]$ and consequently it also holds that $\|K_z(r\cdot)\|_{H^\q(\D)}\lesssim \frac{1}{(1-|z|)^{1/p(\theta)}}$. Thus, \[\|K_z\|_{H^\q(\D)}\lesssim \frac{1}{(1-|z|)^{1/p(\theta)}}.\]
\end{proof}

The Szesg\"o transformation  $\mathcal{K}:L^p(\TT)\to H^p(\D)$ is defined as \[\mathcal{K}f(z)=\int_\TT \frac{f(\zeta)}{1-\overline{\zeta}z} \dif m(\zeta).\] It is known (see for example \cite{Gar}) that $\mathcal{K}$ is onto for $1<p<\infty$. Moreover, a theorem of Hunt-Muckenhoupt and Wheeden  affirms that $\mathcal{K}$ is bounded on the weighted Lebesgue space $L^\p(\TT,w)$ if and only if $w$ is on the Muckenhoupt class $A_p$. A direct consequence of Rubio de Francia extrapolation (\cite{CUF}, Theorem 5.28)  shows that $\mathcal{K}$ is bounded on $L^\p(\TT)$.

\section{An example}

In this section, we show an example of a variable $\log-$H\"older continuous exponent $\p$ for which $H^\p(\D)$ differs from all classical spaces $H^p(\D)$.
Define $p:[0,2\pi]\to [2,3]$ as

\[p(\theta)=\begin{cases}
3, \text{ if }\theta\in  [0, \frac{\pi}{3}]\cup[\frac{5\pi}{3},2\pi]\\
\cos(\theta)+\frac{5}{2}, \text{ if }  \theta\in ( \frac{\pi}{3},\frac{2\pi}{3})\cup (\frac{4\pi}{3},\frac{5\pi}{3})\\
2, \text{ if }\theta\in [\frac{2\pi}{3},\frac{4\pi}{3}].\\
\end{cases}
\]

Notice that as a consequence of the mean value theorem, if $y\in (\frac{\pi}{3},\frac{\pi}{3}+\frac{1}{2})$, then $\left|\cos(y)-\frac{1}{2}\right|\leq \left|y-\frac{\pi}{3}\right|$. Consequently, if $|x-y|<\frac{1}{2}$ and $x\in [0,\frac{\pi}{3}]$ then \[|p(x)-p(y)|=\left|\frac{1}{2}-\cos(y)\right|\leq |x-y|\leq \frac{1}{\log\left( 1/|x-y| \right)}\] and definition \ref{def:log-Holder} holds for $x\in [0,\frac{\pi}{3}]$. The other cases follow similarly and show that $\p$ is $\log$-H\"older continuous.

Recall that for the functions $f(z)=(1-z)^{-\lambda}$ belong to $H^q(\DD)$ if and only if $\lambda<\frac{1}{q}$.
Let $2<q\leq 3$ and $\varepsilon>0$ such that $\frac{1}{q}<\frac{1}{q}+\varepsilon<\frac{1}{2}$ and define $f_{q,\varepsilon}:\DD\to \CC$ as \[f_{q,\varepsilon}=(1+z)^{-\frac{1}{q}-\varepsilon}.\] Then $f_{q,\varepsilon} \not\in H^q(\DD)$. However,  if $0<r<\frac{1}{2}$, then \[\int_{[0,2\pi]} |f_{q,\varepsilon}(re^{i\theta})|^{p(\theta)}\dif \theta \leq \int_{[0,2\pi]} 2^{p(\theta)}\dif \theta<\infty,\] and if  $\frac{1}{2}<r<1$, then
\begin{align*}
\int_{[0,2\pi]} |f_{q,\varepsilon}(re^{i\theta})|^{p(\theta)}\dif \theta &= \int_{[0,2\pi]\setminus[\frac{2\pi}{3},\frac{4\pi}{3}]}|f_{q,\varepsilon}(re^{i\theta})|^{p(\theta)}\dif \theta+\int_{[\frac{2\pi}{3},\frac{4\pi}{3}]} |f_{q,\varepsilon}(re^{i\theta})|^{p(\theta)}\dif \theta\\
&\leq  \int_{[0,2\pi]} r^{-p(\theta)}\dif \theta + \int_{[0,2\pi]} (1+re^{i\theta})^{-\frac{2}{q}-2\varepsilon}\dif \theta <\infty.
\end{align*}
Therefore $f_{q,\varepsilon}\in H^\p(\DD)$.

On the other hand, let $g_{q,\varepsilon}:\DD\to\CC$ be defined as $g_{q,\varepsilon}(z)=(1-z)^{-\frac{1}{q}-\varepsilon}$, then $g_{q,\varepsilon}\in H^2(\DD)$, however since $\frac{1}{q}+\varepsilon\geq \frac{1}{3}$, we have
\[
\int_{[0,2\pi]} |g_{q,\varepsilon}(re^{i\theta})|^{p(\theta)}\dif \theta\geq \int_{[0,\frac{\pi}{3}]} (1-re^{i\theta})^{-\frac{3}{q}-3\varepsilon}\dif \theta=\infty.
\]
Therefore $g_\varepsilon\not\in H^\p(\DD)$.

This example can be generalized to prove the following.

\begin{pro}
Let $1<q_1<q_2<\infty$, then there exists a $2\pi$-periodic $\log$-H\"older continuous function $p$ such that for every $\theta\in[0,2\pi]$ we have that  $q_1\leq p(\theta)\leq q_2$ and $ H^{\p}(\DD)\neq H^{q}(\DD)$, for every $q_1\leq q \leq q_2$.
\end{pro}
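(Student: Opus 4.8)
The plan is to generalize the explicit construction given in the worked example to an arbitrary pair $1<q_1<q_2<\infty$. First I would define a $2\pi$-periodic exponent $p$ taking the value $q_2$ on a neighborhood of $\theta=0$ (say an arc containing $[0,\pi/3]\cup[5\pi/3,2\pi]$), the value $q_1$ on a neighborhood of $\theta=\pi$ (say $[2\pi/3,4\pi/3]$), and interpolated smoothly between these two values on the remaining arcs, exactly as in the example but with $q_2,q_1$ in place of $3,2$. A convenient way to arrange the interpolation is to take a fixed smooth monotone transition (for instance an affine reparametrization of the cosine bridge used above, $p(\theta)=\tfrac{q_1+q_2}{2}+\tfrac{q_2-q_1}{2}\cos\theta$ on the transition arcs) so that the $\log$-H\"older continuity follows from the mean value theorem argument already carried out: since such a transition is Lipschitz, we get $|p(x)-p(y)|\le L|x-y|\le L/\log(1/|x-y|)$ for $|x-y|$ small, which is Definition \ref{def:log-Holder}. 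I would remark that the behavior of $p$ only matters near the two boundary singularities $\theta=0$ and $\theta=\pi$, so the precise shape of the transition is irrelevant.

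Next I would exhibit, for each fixed $q$ with $q_1\le q\le q_2$, a function separating $H^\p(\D)$ from $H^q(\D)$ in one direction or the other. The mechanism is the same fractional singularity test recalled in the excerpt: $(1-z)^{-\lambda}\in H^s(\D)$ if and only if $\lambda<1/s$. The singularity at $z=1$ sits where $p$ equals its maximum $q_2$, while the singularity at $z=-1$ sits where $p$ equals its minimum $q_1$. Thus, choosing $\lambda$ just below $1/q_1$ places $(1+z)^{-\lambda}$ inside $H^\p$ (since near its singularity at $z=-1$ the relevant exponent is $q_1$) but outside $H^q$ whenever $q>q_1$; symmetrically, choosing $\lambda$ just above $1/q_2$ places $(1-z)^{-\lambda}$ inside $H^q$ for $q<q_2$ but outside $H^\p$ (since near $z=1$ the relevant exponent is $q_2$). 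The key point is that for any single $q$ in the range at least one of these two constructions succeeds: if $q<q_2$ the function $(1-z)^{-\lambda}$ with $1/q_2<\lambda<1/q$ lies in $H^q\setminus H^\p$, and if $q=q_2$ (so in particular $q>q_1$) the function $(1+z)^{-\lambda}$ with $1/q_2\le1/q<\lambda<1/q_1$ lies in $H^\p\setminus H^q$. In either case $H^\p(\D)\neq H^q(\D)$.

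The estimates verifying membership run exactly as in the displayed computations of the example, and I would state them in the generalized notation rather than re-deriving them. To check $(1+z)^{-\lambda}\in H^\p$ with $\lambda<1/q_1$, I split the integral $\int_0^{2\pi}|f(re^{i\theta})|^{p(\theta)}\,d\theta$ over the arc near $\theta=\pi$ (where the singularity lives and $p\equiv q_1$) and its complement (where $f$ is bounded uniformly in $r$); on the singular arc the integral is comparable to $\int(1+re^{i\theta})^{-\lambda q_1}\,d\theta$, which is finite precisely because $\lambda q_1<1$, uniformly in $r<1$. The complementary arc contributes a bounded quantity since $|1+re^{i\theta}|$ is bounded below away from $\theta=\pi$. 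The failure of membership, e.g. $(1-z)^{-\lambda}\notin H^\p$ when $\lambda\ge1/q_2$, follows by bounding the full integral below by the contribution of the arc near $\theta=0$, where $p\equiv q_2$, giving $\int_{\text{arc}}(1-re^{i\theta})^{-\lambda q_2}\,d\theta$, which diverges as $r\to1^-$ because $\lambda q_2\ge1$.

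I do not anticipate a genuine obstacle; the proposition is a direct abstraction of the example. The one point requiring a little care is making the two singularities genuinely independent, i.e. ensuring that the test function singular at $z=1$ is bounded near $z=-1$ and vice versa, so that the behavior on the ``wrong'' arc never spoils the integrability conclusion. This is immediate from the placement of $p$'s extrema at antipodal points on $\TT$ and the fact that $|1\pm z|$ is bounded away from zero on the arc opposite to its singularity. I would therefore present the construction and simply indicate that the verifications are identical to those of the preceding example.
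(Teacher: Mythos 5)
Your proposal is correct and is precisely the generalization the paper has in mind: the paper offers no separate proof of the Proposition beyond the remark that the preceding example ``can be generalized,'' and your construction (replacing the values $3,2$ by $q_2,q_1$, keeping the cosine bridge for the $\log$-H\"older bound, and testing with $(1\pm z)^{-\lambda}$ via the criterion $(1-z)^{-\lambda}\in H^s$ iff $\lambda<1/s$) is exactly that generalization, with a correct case split covering every $q\in[q_1,q_2]$.
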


\section{Variable exponent Hardy spaces on the upper half-plane}

In this section, we will consider variable exponents Hardy spaces defined on the upper half-plane $\CC_+=\{z\in\CC: \text{Re}(z)>0\}$. We say that a measurable function $p:\RR_+\to (1,+\infty)$ is $\log$-H\"older continuous if  there exists a positive constant $C_{\log}$ such that
\begin{equation*}
	|p(x)-p(y)|\leqslant \frac{C_{\log}}{\log\left( 1/|x-y| \right)},
	\label{eq:local}
\end{equation*}
for all $x, y \in \RR$., $|x-y|<1/2$. 
We will say that $p$ belongs to the class $LH$ if it is $\log-$H\"older continuous and there exist constants $r_\infty\in\RR$ and $C_\infty>0$ such that for every $x,y\in \RR$,  \[|r(x)-r_\infty|\leq \frac{C_\infty}{\log(e+|x|)}.\] In this section, we will assume that $p$ belongs to the class $LH$. We  denote as $p^+ =    \esss_{x\in \RR}p(x)$ and $ p^-=\essi_{x\in \RR}p(x)$.

 $h(\C_+)$ will denote the space of harmonic functions on $\C_+$. Similarly, we will denote as $H(\C_+)$, the space of analytic functions in $\C_+$. With this notation in hand, we are ready to define the main concept of this section.
\begin{defi}
The variable exponent harmonic Hardy space $h^\p(\C_+)$ is defined as the space of functions $U\in h(\C_+)$ such that
\begin{equation}\label{eq:def_upper}
\sup_{y>0} \int_{-\infty}^\infty |U(x+iy)|^{p(x)}\dif x <\infty.
\end{equation}
Similarly, the variable exponent Hardy space $H^\p(\C_+)$ is defined as the space of functions $U\in H(\C_+)$ that satisfy equation \eqref{eq:def_upper}.
\end{defi}

For $x\in\RR$ and $y>0$, the Poisson kernel of the upper half-plane is defined as \[P_y(x)=\frac{1}{\pi}\frac{y}{x^2+y}.\] Notice that if we define $\Phi:\RR\to\RR$ as \[\Phi(x)=\frac{1}{\pi}\frac{1}{x^2+1}\] and let $\Phi_y(x)=\frac{1}{y}\Phi(\frac{x}{y})$, then $P_y(x)=\Phi_y(x)$ with $\|\Phi_y\|_{L^1(\RR)}=1$ for every $y>0$. Thus, the family $\{\Phi_y\}_{y>0}$ is an approximate identity. We will use several results from \cite{CUF} with this notation that we put together here.

\begin{thm}[\cite{CUF}, Theorems 5.8, 5.9, and 5.11]\label{thm:CU_Poisson}
Suppose $f\in L^\p(\RR)$ and let $y>0$, then \[P_y\ast f(x)=\int_{-\infty}^\infty P_y(x-t)f(t)\dif t<\infty\] and $P_y \ast f(x)\to f(x)$ for almost every $x\in\RR$. If $p^+<\infty$, then the convergence is in measure. Moreover, if additionally $\p$ is in the class $LH$, then there exists $C>0$ such that for every $f\in L^\p(\RR)$, \[\sup_{y>0}\|P_y\ast f\|_{L^\p(\RR)}\leq C\|f\|_{L^\p(\RR)}\] and \[\|P_y\ast f-f\|_{L^\p(\RR)}\to 0, \text{ as } y\to 0.\]
\end{thm}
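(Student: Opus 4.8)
The plan is to treat $\{P_y\}_{y>0}=\{\Phi_y\}_{y>0}$ as the approximate identity generated by the even, radially decreasing, integrable function $\Phi$ with $\|\Phi\|_{L^1(\RR)}=1$, and to reduce each assertion to the classical theory of approximate identities combined with the boundedness of the Hardy--Littlewood maximal operator $M$ on $L^\p(\RR)$. Throughout, note that since $p^->1$, H\"older's inequality on bounded sets gives $L^\p(\RR)\subset L^1_{\mathrm{loc}}(\RR)$, so $Mf<\infty$ almost everywhere.

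First I would establish pointwise finiteness and almost-everywhere convergence. For each fixed $y>0$ the function $\Phi_y$ is bounded and lies in $L^\q(\RR)$ (its $|x|^{-2}$ decay survives any bounded power $q(\cdot)$, since $p^->1$ forces $q^+<\infty$), so H\"older's inequality shows that $P_y\ast f(x)=\int_{-\infty}^\infty\Phi_y(x-t)f(t)\dif t$ converges for every $x$. The crucial estimate is the standard domination $\sup_{y>0}|P_y\ast f(x)|\leq\|\Phi\|_{L^1(\RR)}Mf(x)=Mf(x)$, which holds because $\Phi$ is even and nonincreasing on $[0,\infty)$. The convergence $P_y\ast f(x)\to f(x)$ is then the Lebesgue-point statement for approximate identities possessing a radially decreasing integrable majorant: it holds at every Lebesgue point of $f$, hence almost everywhere.

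The heart of the matter is the uniform bound. Invoking the central theorem of the theory (Diening; Cruz-Uribe--Fiorenza--Neugebauer, see \cite{CUF}), that $M$ is bounded on $L^\p(\RR)$ exactly when $\p\in LH$, I would combine it with the pointwise domination above to obtain \[\sup_{y>0}\|P_y\ast f\|_{L^\p(\RR)}\leq\|Mf\|_{L^\p(\RR)}\leq C\|f\|_{L^\p(\RR)},\] with $C$ independent of $y$. This step is the main obstacle: everything else is classical, whereas the uniform boundedness rests entirely on the deep maximal-function theorem and is precisely where the full $LH$ hypothesis (both the local log-H\"older condition and the decay at infinity) is needed.

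Finally, for the norm convergence I would run a three-term density argument, using that $C_c(\RR)$ is dense in $L^\p(\RR)$ when $p^+<\infty$. Given $\varepsilon>0$ choose $g\in C_c(\RR)$ with $\|f-g\|_{L^\p(\RR)}<\varepsilon$ and split \[\|P_y\ast f-f\|_{L^\p(\RR)}\leq\|P_y\ast(f-g)\|_{L^\p(\RR)}+\|P_y\ast g-g\|_{L^\p(\RR)}+\|g-f\|_{L^\p(\RR)}.\] The first term is $\leq C\varepsilon$ by the uniform bound and the last is $<\varepsilon$; for the middle term one uses that $P_y\ast g\to g$ uniformly as $y\to0$ while the supports stay essentially concentrated, so its $L^\p$-norm tends to $0$. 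Letting $\varepsilon\to0$ gives $\|P_y\ast f-f\|_{L^\p(\RR)}\to0$. The convergence in measure under the weaker hypothesis $p^+<\infty$ (without $\p\in LH$) follows from the same approximate-identity machinery developed in \cite{CUF}.
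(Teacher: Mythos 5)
The paper gives no proof of this statement: it is imported verbatim from Cruz-Uribe and Fiorenza (Theorems 5.8, 5.9 and 5.11 of \cite{CUF}), so there is no internal argument to compare yours against. Your outline is, in substance, the standard proof and essentially the one in the cited source: dominate $\sup_{y>0}|P_y\ast f|$ pointwise by $Mf$ using that $\Phi$ is an even, radially decreasing, normalized $L^1$ function; invoke the boundedness of $M$ on $L^{p(\cdot)}(\RR)$ under the $LH$ hypothesis for the uniform bound; and run a three-term density argument for the norm convergence. So the approach is sound and appropriately identifies the maximal-function theorem as the one nontrivial input.

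Two justifications need repair. First, the inference ``$L^{p(\cdot)}(\RR)\subset L^1_{\mathrm{loc}}(\RR)$, so $Mf<\infty$ almost everywhere'' is not valid: local integrability alone does not control the maximal function (e.g.\ $e^{x^2}\in L^1_{\mathrm{loc}}$ has $M(e^{x^2})\equiv\infty$), and the a.e.\ finiteness of $Mf$ is exactly what makes the Lebesgue-point argument and the pointwise domination meaningful. The correct route is to split $f=f\chi_{\{|f|>1\}}+f\chi_{\{|f|\le 1\}}$; the first piece lies in $L^{p^-}(\RR)$ because $|f|^{p^-}\le |f|^{p(x)}$ where $|f|>1$, the second is bounded, and $M$ is finite a.e.\ on each. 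Second, the maximal theorem is a sufficiency statement: $\p\in LH$ (together with $p^->1$, which you should state explicitly, since $p(x)>1$ pointwise does not give $\essi p>1$) implies $M$ is bounded on $L^\p(\RR)$, but $LH$ is not necessary, so ``exactly when'' overstates it; only the sufficiency is used, so this does not affect the argument. Finally, in the middle term of the density argument, $P_y\ast g$ does not have compact support, so ``the supports stay essentially concentrated'' should be replaced by the explicit tail bound $|P_y\ast g(x)|\lesssim \|g\|_{L^1}\,y/(1+x^2)$ for $x$ off a neighborhood of $\operatorname{supp}g$, whose $L^\p$-norm tends to $0$ as $y\to 0$.
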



\begin{thm}\label{thm:bounded_harm}
Let $p:\RR_+\to (1,+\infty)$ be a measurable function in the class $LH$. For each $k>0$ define $H_k=\{z=x+iy\in\CC: y\geq k\}$. If $U\in h^\p(\C_+)$,  then $U$ is bounded on $H_k$.
\end{thm}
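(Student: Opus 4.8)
The plan is to exploit the subharmonicity of $|U|$ together with the uniform modular control built into the definition of $h^\p(\C_+)$. Fix a point $z_0=x_0+iy_0\in H_k$, so that $y_0\geq k$, and consider the disk $D=D(z_0,k/2)$. Since $y_0-k/2\geq k/2>0$, this disk lies strictly inside $\C_+$ and $U$ is harmonic on a neighborhood of $\overline{D}$. As $U$ is harmonic, $|U|$ is subharmonic, so the area form of the sub-mean value inequality gives
\[
|U(z_0)|\leq \frac{1}{\pi (k/2)^2}\int_D |U(w)|\,\dif A(w),
\]
and everything reduces to bounding $\int_D|U|\,\dif A$ by a constant that does not depend on $z_0$.

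First I would control the modular of $U$ over $D$. Writing $w=x+iy$, the disk is contained in the horizontal strip $\{\,y_0-k/2\leq y\leq y_0+k/2\,\}$, so Fubini's theorem together with the defining bound \eqref{eq:def_upper} yields
\[
\int_D |U(x+iy)|^{p(x)}\,\dif A\leq \int_{y_0-k/2}^{y_0+k/2}\left(\int_{-\infty}^\infty |U(x+iy)|^{p(x)}\,\dif x\right)\dif y\leq k\,M,
\]
where $M=\sup_{y>0}\int_{-\infty}^\infty |U(x+iy)|^{p(x)}\,\dif x<\infty$. The crucial feature is that the right-hand side is independent of $z_0$. The standard relation between the modular and the norm in $L^\p$ (see \cite{CUF}) then converts this into a uniform bound $\norm{U}_{L^\p(D)}\leq C_1$, with $C_1$ depending only on $k$, $M$, $p^-$ and $p^+$.

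Next I would invoke the generalized H\"older inequality for variable exponent spaces: with $\frac{1}{p(x)}+\frac{1}{q(x)}=1$ one has $\int_D |U|\,\dif A\lesssim \norm{U}_{L^\p(D)}\,\norm{1}_{L^\q(D)}$. Since $1<p^-\leq p^+<\infty$ (guaranteed by the class $LH$) forces $q$ to be bounded, the factor $\norm{1}_{L^\q(D)}$ depends only on the exponent bounds and on $|D|=\pi(k/2)^2$, hence is also independent of the position of $z_0$. Combining the three displays gives $|U(z_0)|\leq C$ for a constant $C=C(k,M,p)$ that does not depend on $z_0\in H_k$, which is exactly the asserted boundedness. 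The main obstacle is making sure every constant is genuinely uniform in $z_0$: translation invariance of $|D|$ handles the H\"older factor, but the delicate point is that both the modular-to-norm passage and the H\"older estimate must be carried out using only the global quantities $p^-$, $p^+$ and the $LH$ regularity of $p$, rather than any local behaviour of the exponent that would drift as $z_0$ moves toward spatial infinity; this is precisely where boundedness of $p$ is used. Once the uniform modular bound $kM$ is in hand, the remainder is routine.
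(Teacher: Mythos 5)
Your argument is correct and follows essentially the same route as the paper: a mean-value estimate reduces $|U(z_0)|$ to an area integral of $|U|$ over a disk of radius comparable to $k$, which is then controlled by the variable-exponent H\"older inequality together with the uniform modular bound in \eqref{eq:def_upper}. The only (harmless) difference is in the H\"older step: the paper slices the square horizontally and invokes the estimate $\norm{\chi_{(a-R,a+R)}}_{L^\q(\RR)}\sim (2R)^{1/q(a)}$ from \cite{rafsam2011}, whereas you apply H\"older on the two-dimensional disk and rely only on the crude modular-to-norm bounds, which in fact avoids any use of the $LH$ regularity at this stage.
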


\begin{proof}
Let $z_0=a+ib$, with $b>0$ and let $U\in h^\p(\C_+)$. Since $U$ is harmonic, for $r\leq R<b$, we have that 
\[U(z_0)=\frac{1}{2\pi}\int_{-\pi}^\pi U(z_0+re^{it})\dif t,\] consequently 
\begin{align*}
|U(z_0)|\frac{R^2}{2} &\leq \frac{1}{2\pi}\int_0^R \int_{-\pi}^\pi |U(z_0+re^{it})|r\dif r\dif t\\
&\leq  \frac{1}{2\pi}\int_{b-R}^{b+R} \int_{a-R}^{a+R} |U(x+iy)|\dif x\dif y\\
&\leq \frac{1}{\pi}\int_{b-R}^{b+R} \|U_y\|_{L^\p(\RR)}\|\chi_{(a-R,a+R)}\|_{L^\q(\RR)}\dif y
\end{align*}
where $U_y:\RR\to\RR$ denotes the function $U_y(x)=U(x+iy)$ and $\chi_{(a-R,a+R)}$ denotes the characteristic function on the interval $(a-R,a+R)$. It is shown in \cite{rafsam2011}, Lemma 2.5 that \[\|\chi_{(a-R,a+R)}\|_{L^\q(\RR)}\sim (2R)^{1/q(a)}.\] Thus,
\begin{align*}
|U(z_0)|&\lesssim \sup_{y>0}\|U_y\|_{L^\p(\RR)} (R)^{1/q(a)-2}
\end{align*}
which proves the result since $U\in h^\p(\C_+)$.
\end{proof}

Now, lets denote as $h(\overline{\CC_+})$ as the space of harmonic functions on an open half-plane containing $\overline{\CC_+}$. Suppose $U\in h(\CC_+)$ and for $\beta>0$ define $U_\beta(z)= U(z+i\beta)$. Then  $U_\beta\in h(\overline{\CC_+})$ and it is bounded on $\overline{\CC_+}$. The following representation for  $U_\beta$ is proven in \cite{Mas}, Theorem 11.2. For $y>0$, 
\begin{equation}\label{eq:repUbeta}
U_\beta(x+iy)=\frac{1}{\pi}\int_{-\infty}^\infty P_y(x-t)U_\beta(t)\dif t.
\end{equation}

We will use such representation to prove the following.

\begin{thm}
Let $p:\RR_+\to (1,+\infty)$ be a measurable function in the class $LH$. Suppose $U\in h^\p(\CC_+)$. Then there exists a unique $u\in L^\p(\RR)$ such that for every $x\in\RR$ and $y>0$, 
\begin{equation*}\label{eq:them_rep}
U(x+iy)=\frac{1}{\pi}\int_{-\infty}^\infty P_y(x-t)u(t)\dif t.
\end{equation*}
 Moreover, \[\sup_{y>0}\|U_y\|_{L^\p(\RR)}\sim \|u\|_{L^\p(\RR)}.\]
\end{thm}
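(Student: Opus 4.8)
The plan is to mirror the proof of Theorem~\ref{thm:Poisson}, replacing the boundary dilations of the disk by the vertical translates $U_\beta$, and the Banach--Alaoglu plus duality argument by its analogue on $L^\p(\RR)$. I assume throughout, as is implicit in the duality and in Theorem~\ref{thm:CU_Poisson}, that $1<p^-\le p^+<\infty$, so that the conjugate exponent satisfies $1<q^-\le q^+<\infty$ and $L^\q(\RR)$ is separable and reflexive.

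First I would record the boundary data of the translates. For $\beta>0$ set $u_\beta(t)=U_\beta(t)=U(t+i\beta)$. Since $U\in h^\p(\CC_+)$, the defining condition \eqref{eq:def_upper} gives $\norm{u_\beta}_{L^\p(\RR)}=\norm{U_\beta}_{L^\p(\RR)}\le \sup_{y>0}\norm{U_y}_{L^\p(\RR)}<\infty$, so the family $\{u_\beta\}_{\beta>0}$ is bounded in $L^\p(\RR)$. Define the functionals $\Lambda_\beta:L^\q(\RR)\to\CC$ by $\Lambda_\beta(f)=\int_{-\infty}^\infty f(t)u_\beta(t)\dif t$; by H\"older's inequality $|\Lambda_\beta(f)|\lesssim \norm{u_\beta}_{L^\p(\RR)}\norm{f}_{L^\q(\RR)}$, whence $\Lambda_\beta\in(L^\q(\RR))^\ast$ with $\norm{\Lambda_\beta}\lesssim \sup_{y>0}\norm{U_y}_{L^\p(\RR)}$.

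Next, fix a sequence $\beta_n\to 0^+$ and apply the sequential Banach--Alaoglu theorem (using separability of $L^\q(\RR)$) to extract a subsequence with $\Lambda_{\beta_{n_k}}\to\Lambda$ in the weak-$\ast$ topology, where $\norm{\Lambda}\lesssim \sup_{y>0}\norm{U_y}_{L^\p(\RR)}$ by weak-$\ast$ lower semicontinuity of the norm. The duality $(L^\q(\RR))^\ast\cong L^\p(\RR)$ (see \cite{CUF}) then produces $u\in L^\p(\RR)$ with $\norm{u}_{L^\p(\RR)}\sim\norm{\Lambda}$ and $\Lambda(f)=\int_{-\infty}^\infty f(t)u(t)\dif t$ for all $f\in L^\q(\RR)$. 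To identify $U$ with the Poisson transform of $u$, I would test against $f=P_y(x-\cdot)$ for fixed $x\in\RR$ and $y>0$; this function is bounded and decays like $t^{-2}$, hence lies in $L^\q(\RR)$ since $q^+<\infty$. The representation \eqref{eq:repUbeta} applied to $U_\beta$ reads $U(x+i(y+\beta))=\frac1\pi\Lambda_\beta(P_y(x-\cdot))$, and letting $k\to\infty$ along the fixed subsequence gives, using the weak-$\ast$ convergence on the right and the continuity of the harmonic function $U$ on the left, \[U(x+iy)=\frac1\pi\int_{-\infty}^\infty P_y(x-t)u(t)\dif t.\]

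Finally, the norm equivalence and uniqueness follow from Theorem~\ref{thm:CU_Poisson}. The bound $\norm{u}_{L^\p(\RR)}\lesssim\sup_{y>0}\norm{U_y}_{L^\p(\RR)}$ is already contained in $\norm{u}\sim\norm{\Lambda}\lesssim\sup_{y>0}\norm{U_y}$; conversely, the representation just obtained exhibits $U_y$ as the Poisson transform of $u$, so the uniform boundedness of the Poisson transform in Theorem~\ref{thm:CU_Poisson} yields $\sup_{y>0}\norm{U_y}_{L^\p(\RR)}\lesssim\norm{u}_{L^\p(\RR)}$. For uniqueness, if $u_1,u_2\in L^\p(\RR)$ both give the representation, then the Poisson transforms of $u_1$ and $u_2$ coincide for every $y>0$; letting $y\to0^+$ and using the convergence in Theorem~\ref{thm:CU_Poisson} forces $u_1=u_2$ almost everywhere. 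The step I expect to require the most care is the passage to the limit in the representation: once the subsequence is extracted it is fixed for \emph{all} test functions, so the only genuine verifications are that $P_y(x-\cdot)\in L^\q(\RR)$ (to legitimize the weak-$\ast$ convergence pointwise in $(x,y)$) and that the functional-analytic prerequisites — separability and the duality $(L^\q)^\ast\cong L^\p$ — hold, both of which rest on $1<p^-\le p^+<\infty$; the remaining details are routine adaptations of the disk argument in Theorem~\ref{thm:Poisson}.
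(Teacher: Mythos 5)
Your proposal is correct and follows essentially the same route as the paper's own proof: vertical translates $U_\beta$, the representation \eqref{eq:repUbeta}, Banach--Alaoglu plus the duality $(L^\q)^\ast\cong L^\p$ to produce $u$, testing against $P_y(x-\cdot)$, and deriving the norm equivalence and uniqueness from Theorem~\ref{thm:CU_Poisson}. The only (welcome) additions are your explicit checks that $P_y(x-\cdot)\in L^\q(\RR)$ and that separability underlies the sequential extraction; just note that applying \eqref{eq:repUbeta} requires the boundedness of $U_\beta$ on $\overline{\CC_+}$, which the paper obtains from Theorem~\ref{thm:bounded_harm}.
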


\begin{proof}
The uniqueness follows from Theorem \ref{thm:CU_Poisson} and the pointwise convergence \[P_y\ast u\to u,~~\text{ as } y\to 0.\] 

To prove the existence, suppose $U\in h^\p(\CC_+)$ and for $\beta>0$ define $U_\beta(z)=U(z+i\beta)$ as before. By Theorem \ref{thm:bounded_harm} $U_\beta$ is bounded on $\overline{\CC_+}$. Also, since $U_\beta\in h(\overline{\CC}_+)$ we have that by equation \eqref{eq:repUbeta} that for every $x\in \RR$, and $y>0$ \[U(x+iy+i\beta)=\frac{1}{\pi}\int_{-\infty}^\infty P_y(x-t)U(t+i\beta)\dif t.\] 

Let $\q$ be the conjugate od $\p$. For each positive integer $n$, let's define $\Lambda_n:L^\q(\RR)\to \RR$ as 
\begin{equation*}
\Lambda_n(f)=\int_{-\infty}^\infty f(x)U_{1/n}(x)\dif x.
\end{equation*}
Notice that by H\"older's inequality, \[|\Lambda_n(f)|\lesssim \|f\|_{L^\q(\RR)}\|U_{1/n}\|_{L^\p(\RR)}\leq\|f\|_{L^\q(\RR)}\sup_{y>0}\|U_{y}\|_{L^\p(\RR)}.\] Hence, \[\|\Lambda_n\|\lesssim \sup{y>0}\|U_{y}\|_{L^\p(\RR)}.\] By Banach-Alaoglu's Theorem, there exists a subsequence $\{\Lambda_{n_k}\}$ that converges to a bounded linear functional $\Lambda\in (L^\q(\RR))^\ast$ in the weak-$\ast$ topology. Using the duality among $L^p(\RR)$ and $L^\q(\RR)$, we can choose $u\in L^\p(\RR)$ such that for every $f\in L^\q(\RR)$, \[\Lambda(f)=\int_{-\infty}^\infty f(t)u(d)\dif t\] and $\|u\|_L^{\p(\RR)}\sim \|\Lambda\|$.

Now define $f_y(t)=P_y(x-t)$ and notice that $f_y\in L^\q(\RR)$. Then 
\begin{align*}
\Lambda(f_y) &= \lim_{k\to\infty} \Lambda_{n_k}(f_y)\\
&=\lim_{k\to\infty} \int_{-\infty}^\infty P_y(x-t)U(t+\frac{i}{n_k})\dif t\\
&= \lim_{k\to\infty} U(x+iy+ \frac{i}{n_k})\\
&=U(x+iy).
\end{align*}
This shows equation \ref{eq:them_rep}. Finally, it follows from Theorem \ref{thm:CU_Poisson}, that \[\sup_{y>0}\|P_y\ast u\|_{L^\p(\RR)}\lesssim \|u\|_{L^\p(\RR)}.\]
\end{proof}

\end{document}